\newtheorem{theorem}{Theorem}
\theoremstyle{definition}
\newtheorem{definition}{Definition}
\newcommand{\prn}[1]{\left(#1\right)}
\newcommand{\vecpower}[2]{\vec{#1}^{\,#2}}
\newcommand{\veci}[1]{\vec{#1}}					
\newcommand{\vecipower}[2]{\veci{#1}^{\,#2}}			
\newcommand{\vecc}{{\veci{c}}} 					
\newcommand{\vece}{\veci{e}} 					
\newcommand{\dt}{{\Delta t}} 					
\newcommand{\err}{\epsilon}					
\newcommand{\lte}{\delta}					
\newcommand{\sov}[1]{\veci{\tau}^{\,(#1)}} 			
\newcommand{\soq}{\tilde{q}}					
\newcommand{\sm}[1]{\scalebox{.75}{#1}}
\begin{document}
\parskip.9ex

\title[DIRK Schemes with High Weak Stage Order]
{DIRK Schemes with High Weak Stage Order}

\author[D. Ketcheson]{David Ketcheson}
\address[David Ketcheson]
{Applied Mathematics and Computational Science \\ King Abdullah University of Science and Technology \\
4700 Thuwal 23955-6900 \\ Saudi Arabia}
\email{david.ketcheson@kaust.edu.sa}
\urladdr{http://www.davidketcheson.info}

\author[B. Seibold]{Benjamin Seibold}
\address[Benjamin Seibold]
{Department of Mathematics \\ Temple University \\
1805 North Broad Street \\ Philadelphia, PA 19122}
\email{seibold@temple.edu}
\urladdr{http://www.math.temple.edu/\~{}seibold}

\author[D. Shirokoff]{David Shirokoff}
\address[David Shirokoff]
{Department of Mathematical Sciences \\	New Jersey Institute of Technology \\
University Heights \\ Newark, NJ 07102}
\email{david.g.shirokoff@njit.edu}
\urladdr{https://web.njit.edu/\~{}shirokof}

\author[D. Zhou]{Dong Zhou}
\address[Dong Zhou]
{Department of Mathematics \\ California State University Los Angeles \\
1805 North Broad Street \\ Philadelphia, PA 19122}
\email{dzhou11@calstatela.edu}

\subjclass[2010]{65L04; 65L20; 65M12}

\keywords{Runge-Kutta, order reduction, weak stage order}

\begin{abstract}
Runge-Kutta time-stepping methods in general suffer from order reduction: the observed order of convergence may be less than the formal order when applied to certain stiff problems. Order reduction can be avoided by using methods with high stage order. However, diagonally-implicit Runge-Kutta (DIRK) schemes are limited to low stage order. In this paper we explore a weak stage order criterion, which for initial boundary value problems also serves to avoid order reduction, and which is compatible with a DIRK structure. We provide specific DIRK schemes of weak stage order up to 3, and demonstrate their performance in various examples.
\end{abstract}

\maketitle

\section{Introduction}
\label{sec:intro}
Runge-Kutta (RK) methods achieve high-order accuracy in time by means of combining approximations to the solution at multiple stages. An $s$-stage RK scheme can be represented via the Butcher tableau
\begin{equation*}
	\begin{array}{c|c}
		\vecc & A\ \\ \hline
		& \vecipower{b}{T}
	\end{array}
	\,=\,
	\begin{array}{c|cccc}
		c_1 	& a_{11} &  \cdots & a_{1s}\\
		\vdots	& \vdots &         & \vdots\\
		c_s		& a_{s1} &  \cdots & a_{ss}\\ \hline
		& b_{1}  &  \cdots & b_{s}
	\end{array}\;.
\end{equation*}
Throughout the whole paper we assume that $\vecc=A\vece$, where $\vece$ is the vector of all ones. The scheme's stability function \cite{WannerHairer1991} $R(\zeta) = 1+\zeta\vecipower{b}{T}(I - \zeta A)^{-1}\vece$ measures the growth $u^{n+1}/u^n$ per step $\dt$, when applying the scheme to the linear model equation $u'(t) = \lambda u$, with $\zeta = \lambda\dt$.

A particular interest lies in the accuracy of the RK scheme for stiff problems, i.e., problems in which a larger time step is chosen than the fastest time scale of the problem's dynamics. A standard stiff model problem \cite{ProtheroRobinson1974} is the scalar linear ordinary differential equation (ODE)
\begin{equation}
\label{eq:linear_ode_test_problem}
u' = \lambda (u-\phi(t)) + \phi'(t)\;,
\end{equation}
with i.c.\ $u(0) = \phi(0)$ and $\mbox{Re}\,\lambda\leq 0$. The true solution $y(t) = \phi(t)$ evolves on an $O(1)$ time scale. Hence, $\lambda$-values with large negative real part result in stiffness. Considering a family of test problems (parametrized by $\lambda$), one can now establish the scheme's convergence via two different limits: (a)~the non-stiff limit $\Delta t\to 0$ and $\zeta \to 0$; and (b)~the stiff limit $\Delta t\to 0$ and $\zeta \to -\infty$. A characteristic property of most RK schemes is that, while the non-stiff limit recovers the scheme's order (as given by the order conditions \cite{ButcherBook2008,HairerNorsettWanner1993}), the error decays at a reduced order in the stiff limit. This phenomenon is called ``order reduction'' (OR)
\cite{Verwer1986,SanzVerwerHundsdorfer1986,OstermannRoche1992,CarpenterGottliebAbarbanelDon1995,BurragePetzold1990} and it manifests in various ways for more complex problems, including numerical boundary layers \cite{Minion2003}.
The OR phenomenon can be seen by studying the RK scheme applied to \eqref{eq:linear_ode_test_problem}. The approximation error at time $t_{n+1}$ reads \cite[Chapter IV.15]{WannerHairer1991}
\begin{equation}
\label{eq:error_formula}
\err^{n+1} = R(\zeta)\,\err^{n}
+ \zeta\vecipower{b}{T}(I-\zeta A)^{-1}\vec{\lte}_s^{\,{n+1}}
+ \lte^{n+1}\;,
\end{equation}
where $R(\zeta)$ is the growth factor, and
\begin{equation*}
\vec{\lte}_s^{\,{n+1}}
=\! \sum_{j\geq 2}\textstyle\frac{\dt^{\,j}}{(j-1)!}\,\sov{j}\phi^{(j)}(t_n)
\;,\quad
\lte^{n+1} =\! \displaystyle\sum_{j\geq 1}\textstyle\frac{\dt^{\,j}}{(j-1)!}
\!\left(\vecipower{b}{T}\vecipower{c}{j-1}-\textstyle\frac{1}{j}\right)\!
\phi^{(j)}(t_n)
\end{equation*}
are the truncation errors incurred at the intermediate stages and at the end of the step, respectively. Here, $\phi^{(j)}$ denotes the $j$-th derivative of the solution, and the vectors
\begin{equation*}
\sov{j} = A\vecipower{c}{j-1} - \tfrac{1}{j}\vecipower{c}{j}\;, \quad j=1,2,\dots
\end{equation*}
we call the \emph{stage order residuals} or \emph{stage order vectors}. The condition $\sov{\eta}=0$ for $0 \le \eta \le j$ appears often in the literature and is also referred to as the simplifying assumption $C(\eta)$ \cite{WannerHairer1991}. In \eqref{eq:error_formula}, the step error $\lte^{n+1}$ is of the formal order (in $\dt$) of the scheme (due to the order conditions). Moreover, the growth factor carries over (more or less, see \cite{DitkowskiGottlieb2017}) the accuracy from one to the next step. Hence, the critical expression for OR is the term involving the stage error $\vec{\lte}_s^{\,{n+1}}$. Specifically, the asymptotic behavior of the expression
\begin{equation}
\label{eq:error_stages}
g^{(j)} = \zeta\vecipower{b}{T}(I-\zeta A)^{-1}\sov{j}
\end{equation}
matters. In the non-stiff limit ($\zeta \ll 1$), a Neumann expansion yields $\zeta(I-\zeta A)^{-1} = \zeta I+\zeta^2 A+\zeta^3 A^2+\dots$, leading to expressions $\vecipower{b}{T}A^\ell\sov{j}$ with $\ell>0$. And in fact the order conditions guarantee that $\vec{b}^T A^\ell \sov{j} = 0$ for $0 \leq \ell + j \leq p -1$ to ensure the formal order of the scheme.

Conversely, in the stiff limit we can treat $\zeta^{-1}$ as the small parameter and expand
$\zeta(I-\zeta A)^{-1} = -A^{-1}(I-\zeta^{-1} A^{-1})^{-1} = -A^{-1}-\zeta^{-1} A^{-2}-\zeta^{-2} A^{-3}-\dots$, leading to expressions $\vecipower{b}{T}A^\ell\sov{j}$ with $\ell<0$.
The order conditions do \emph{not} imply that these quantities vanish, and in general one may observe a reduced rate of convergence.

A key question is therefore whether additional conditions can be imposed on the RK scheme that recover the scheme's order in the stiff regime. A well-known answer to the question is:
\begin{definition}
Let $\hat{p}$ denote the order of the quadrature rule of an RK scheme.
Let $\hat{q}$ denote the largest integer such that
$\sov{j}=0$ for $1\le j\le \hat{q}$. The \emph{stage order} of a RK scheme is $q=\min(\hat{p},\hat{q})$.
\end{definition}
Having stage order $q$ implies that the error decays at an order of (at least) $q$ in the stiff regime (see also \cite{WannerHairer1991}). This work focuses particularly on diagonally-implicit Runge-Kutta (DIRK) schemes, for which $A$ is lower diagonal. A known drawback of DIRK schemes is that they cannot have high stage order:
\begin{theorem}\label{thm:stage_order}
The stage order of an irreducible DIRK scheme is at most 2. The stage order of a DIRK scheme with non-singular $A$ is at most 1.
\end{theorem}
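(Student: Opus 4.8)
The plan is to read the stage order conditions componentwise and reinterpret them as exactness statements for quadrature rules. Under the standing assumption $\vecc=A\vece$ one has $\sov{1}=0$ automatically, and since $A$ is lower triangular the vanishing of $\sov{1},\dots,\sov{m}$ is equivalent to the following: for every stage $i$, the ``row rule'' $\phi\mapsto\sum_{k=1}^{i}a_{ik}\phi(c_k)$ integrates every polynomial of degree less than $m$ exactly over $[0,c_i]$, the $j$-th monomial $x^{j-1}$ coming from the $i$-th component of $\sov{j}=0$ (the case $j=1$ being $\vecc=A\vece$ itself). The triangular structure is essential here: the $i$-th row rule only uses the nodes $c_1,\dots,c_i$, and in particular the first row rule is the single-node rule $\phi\mapsto a_{11}\phi(c_1)$ with $c_1=a_{11}$.

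I would handle the non-singular case first. If $A$ is lower triangular and non-singular then $a_{ii}\neq0$ for all $i$, so $c_1=a_{11}\neq0$. Were the stage order at least $2$, i.e. $\sov{2}=0$, exactness of the first row rule for $x$ would give $a_{11}c_1=\tfrac12c_1^2$, hence $a_{11}^2=\tfrac12a_{11}^2$ and $a_{11}=0$, a contradiction; so $\hat q\le1$ and the stage order $\min(\hat p,\hat q)\le1$. For the first statement I would argue by contradiction that an (irreducible) DIRK cannot have stage order $3$: assume $\sov{2}=\sov{3}=0$ and $\hat p\ge3$. Exactness of the first row rule for $x$ now gives $a_{11}=0$, hence $c_1=0$; I would then prove by induction on $i$ that $c_i=0$ for every stage, the inductive step being that if $c_1=\dots=c_{i-1}=0$ then the $i$-th row rule has all its nodes at the origin except possibly $c_i$, so its exactness for $x$ and for $x^2$ forces $a_{ii}c_i=\tfrac12c_i^2$ and $a_{ii}c_i^2=\tfrac13c_i^3$, which are incompatible unless $c_i=0$. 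Hence $\vecc=0$, but then $\vecipower{b}{T}\vecc=0\neq\tfrac12$ so $\hat p\le1$, contradicting $\hat p\ge3$; therefore the stage order is at most $2$.

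The one step I expect to require care is the induction that forces $\vecc=0$, together with cleanly disposing of that degenerate tableau. In fact the argument above shows the bound holds for any DIRK with $\vecc=A\vece$, so the role of the irreducibility hypothesis is chiefly to exclude such trivial schemes from the outset, matching the classical formulation; alternatively one can note, as above, that $\vecc=0$ immediately caps the quadrature order at $1$. Everything else — the reformulation as row rules and the two short computations — is a direct unwinding of the definitions.
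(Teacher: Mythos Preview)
Your proof is correct. For the non-singular case you argue exactly as the paper does, via $\sov{2}_1 = a_{11}c_1 - \tfrac{1}{2}c_1^2 = \tfrac{1}{2}a_{11}^2 \neq 0$. For the general bound the paper takes a shorter but different route: assuming $a_{11}=c_1=0$ and stage order $3$, it reads $\sov{2}_2=\sov{3}_2=0$ at the \emph{second} stage only, deduces $a_{21}=a_{22}=c_2=0$, and then invokes irreducibility directly (the first two stages collapse). Your induction instead propagates $c_i=0$ through \emph{all} stages and derives the contradiction from the quadrature bound $\hat p\le 1$, never touching irreducibility; as you observe, this actually proves the stage-order bound for every DIRK with $\vecc=A\vece$, irreducible or not. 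The paper's argument is more localized and a line shorter; yours is slightly more self-contained and yields a marginally stronger statement.
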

\begin{proof}
Since $\vecc=A\vece$, we have $\sov{2}_1 = a_{11}c_1 - \frac{1}{2}(c_1)^2 = \frac{1}{2}(a_{11})^2$.  Thus if $A$ is non-singular, one has $\sov{2}\neq 0$, so $q \le 1$. Consider now the case that $a_{11} = c_1 = 0$, and suppose that the method has stage order 3.  The conditions $\sov{2}_2 = \sov{3}_2 = 0$ then imply $a_{21} = a_{22} = c_2 = 0$, which would render the scheme reducible. Hence, $q\le 2$.
\qed
\end{proof}
Hence, while DIRK schemes possess an implementation-friendly structure (each stage is a backward-Euler-type solve), their potential to avoid OR by means of high stage order is limited. We therefore move to a weaker condition that can avoid OR in some situations for higher order in the context of DIRK schemes.

\vspace{1.5em}
\section{Weak Stage Order}
\label{sec:weak_stage_order}
To avoid order reduction, the expressions $g^{(j)}$ in \eqref{eq:error_stages} need to vanish in the stiff limit. In line with \cite{RosalesSeiboldShirokoffZhou2017OR}, we define the following criteria:
\begin{definition}[weak stage order]
\label{def:wso}
A RK scheme has weak stage order (WSO) $\tilde{q}$ if there is an $A$-invariant subspace that is orthogonal to $\vec{b}$ and that contains the stage order vectors $\sov{j}$ for $1\le j\le \tilde{q}$.
\end{definition}
\begin{theorem}(WSO is the most general condition that ensures $g^{(j)} = 0$ for all $\zeta > 0$)
Let coefficients $A, b$ be given.  Then $g^{(j)} = 0$ for all $\zeta > 0$
and $1\leq j \leq \tilde{q}$ if and only if the corresponding RK scheme has weak stage order $\tilde{q}$.
\end{theorem}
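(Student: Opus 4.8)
The plan is to prove both implications by working with one explicit candidate for the $A$-invariant subspace, namely the smallest subspace containing the relevant stage order vectors. I would first dispatch the ``if'' direction: given an $A$-invariant subspace $W$ with $W\perp\vec b$ and $\sov j\in W$ for $1\le j\le\tilde q$, note that since $W$ is finite-dimensional and $A$-invariant, $I-\zeta A$ maps $W$ into itself and, being globally invertible, restricts to a bijection of $W$, so $(I-\zeta A)^{-1}$ also preserves $W$. Then $(I-\zeta A)^{-1}\sov j\in W$, and pairing against $\vec b\perp W$ yields $g^{(j)}=\zeta\,\vecipower{b}{T}(I-\zeta A)^{-1}\sov j=0$ at every $\zeta$ where the expression is defined.

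For the converse, the key step is to turn the vanishing of $g^{(j)}$ into algebraic identities. Since $g^{(j)}(\zeta)$ is a rational function of $\zeta$, vanishing for all $\zeta>0$ forces it to vanish identically; expanding near $\zeta=0$ with the Neumann series $\zeta(I-\zeta A)^{-1}=\sum_{\ell\ge 0}\zeta^{\ell+1}A^\ell$ gives $g^{(j)}(\zeta)=\sum_{\ell\ge 0}\zeta^{\ell+1}\,\vecipower{b}{T}A^\ell\sov j$, so matching coefficients yields $\vecipower{b}{T}A^\ell\sov j=0$ for all $\ell\ge 0$ and all $1\le j\le\tilde q$. I would then set
\[ W:=\operatorname{span}\{\,A^\ell\sov j : \ell\ge 0,\ 1\le j\le\tilde q\,\}, \]
which is manifestly $A$-invariant and contains each $\sov j$ (take $\ell=0$), while the identities just derived show that every element of $W$ is orthogonal to $\vec b$, i.e.\ $W\perp\vec b$. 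By Definition~\ref{def:wso} this means the scheme has weak stage order $\tilde q$.

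The main obstacle — really the only subtlety — is that $A$ need not have spectrum in the left half-plane; for a DIRK tableau the eigenvalues are the diagonal entries $a_{ii}$, which are typically positive, so $g^{(j)}$ generically has poles on $(0,\infty)$. I would therefore read the hypothesis ``$g^{(j)}=0$ for all $\zeta>0$'', and the conclusion of the ``if'' part, as identities of rational functions, i.e.\ vanishing wherever $\det(I-\zeta A)\ne 0$; this is a cofinite set, hence has accumulation points, so the rational-function argument applies. The Neumann expansion is then valid on a genuine neighborhood of $\zeta=0$, where $I-\zeta A$ is invertible, and the coefficient-matching is legitimate. Everything beyond this point is routine finite-dimensional linear algebra, and the argument also makes transparent that weak stage order $\tilde q$ is equivalent to the family of moment conditions $\vecipower{b}{T}A^\ell\sov j=0$ for all $\ell\ge 0$ and $1\le j\le\tilde q$.
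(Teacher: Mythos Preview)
Your proposal is correct and follows essentially the same line as the paper: both identify WSO with the moment conditions $\vecipower{b}{T}A^\ell\sov j=0$ via the Krylov subspace $\operatorname{span}\{A^\ell\sov j\}$, the only cosmetic differences being that the paper extracts these conditions by differentiating $\vecipower{b}{T}(I-\zeta A)^{-1}\sov j$ and letting $\zeta\to 0^+$ (and truncates at $\ell\le s-1$ via Cayley--Hamilton) rather than by the Neumann expansion. Your careful reading of ``for all $\zeta>0$'' modulo poles is a point the paper leaves implicit.
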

\begin{proof}
Let $C(G)$ denote the column space of
\begin{equation*}
G := \begin{bmatrix} \sov{1}, A \sov{1},A^2 \sov{1}, \ldots, A^{s-1}\sov{1}, \sov{2}, A\sov{2}, \ldots, A^{s-1}\sov{\tilde{q}} \end{bmatrix}.
\end{equation*}
From the Cayley-Hamilton theorem it follows that WSO $\tilde{q}$ is equivalent to
\begin{equation}
\label{eq:WSODef_2}
\vec{b}^T A^\ell \sov{j} = 0, \quad \quad 0\leq \ell \leq s-1, \ 1 \leq j \leq \tilde{q}\;.
\end{equation}
\noindent\fbox{$\Longrightarrow$}
Because $C(G)$ is $A$-invariant, $C(G)$ is invariant under multiplication
by $(1-\zeta A)^{-1}$, i.e. if $\vec{v} \in C(G)$ then for any  $\zeta > 0$, the product
$(1-\zeta A)^{-1} \vec{v} \in C(G)$.
Since $\vec{b}$ is orthogonal to $C(G)$, we have $g^{(j)}=0$ for all $1\leq j \leq \tilde{q}$.\\[.2em]
\noindent\fbox{$\Longleftarrow$} If $g^{(j)} = 0$, then $\zeta^{-1} g^{(j)} = \vec{b}^T(1 -\zeta A)^{-1} \sov{j} = 0$ for all $\zeta > 0$. Differentiating both sides of this equation $\ell$-times, with respect to $\zeta$, and taking the limit as $\zeta \rightarrow 0^+$, yields the conditions in equation \eqref{eq:WSODef_2}.
\qed
\end{proof}
\begin{definition}[weak stage order eigenvector criterion]
\label{def:wso_ev}
A RK scheme satisfies the WSO eigenvector criterion of order $\tilde{q}_e$ if for each $1\le j\le \tilde{q}_e$, there exists $\mu_j$ such that $A\sov{j} = \mu_j\sov{j}$, and moreover, $\vecipower{b}{T}\sov{j} = 0$.
\end{definition}
The WSO eigenvector criterion of order $\tilde{q}_e$ implies WSO (of at least) $\tilde{q}_e$. For a given scheme, let $p$ denote the classical order, $q$ the stage order, and $\tilde{q}$ the weak stage order. Then we have $\tilde{q} \ge q$ and $p \ge q$. Note however that a method with WSO $\tilde{q}\ge 1$ need not even be consistent; order conditions must be imposed separately.

The WSO eigenvector criterion may serve to avoid OR because it implies that
\begin{equation*}
g^{(j)} = \zeta\vecipower{b}{T}(1-\zeta\mu_j)^{-1}\sov{j}
= \frac{\zeta}{1-\zeta\mu_j}\vecipower{b}{T}\sov{j}\;,
\end{equation*}
i.e., it allows one to ``push'' the stage order residuals past the matrix $(1-\zeta A)^{-1}$, and then use $\vecipower{b}{T}\sov{j} = 0$. Note that the condition $\vecipower{b}{T}\sov{j} = 0$ that is required in Def.~\ref{def:wso_ev} is actually automatically satisfied (due to the order conditions) if $p>\tilde{q}_e$ (or $p\ge \tilde{q}_e$ for stiffly accurate schemes).

It must be stressed that the concept of WSO (both criteria) is based on the linear test equation \eqref{eq:linear_ode_test_problem}, hence it is not clear to what extent WSO will remedy OR for nonlinear problems or problems with time-dependent coefficients. In Section~\ref{sec:numerical_results} we numerically investigate some nonlinear test problems.

Finally, we present a limitation theorem on the WSO eigenvector criterion.
\begin{theorem}
DIRK schemes with invertible $A$ have $\tilde{q}_e\le 3$.
\end{theorem}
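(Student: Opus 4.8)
The plan is to exploit the lower-triangular structure of $A$ together with the constraint $\vecc = A\vece$ and the first entries of the stage order vectors, much as in the proof of Theorem~\ref{thm:stage_order}, but now chasing eigenvector relations instead of vanishing. First I would set up notation: write the diagonal entries of $A$ as $a_{ii}$ (all nonzero since $A$ is invertible and lower triangular), and recall from the computation in the proof of Theorem~\ref{thm:stage_order} that $\sov{2}_1 = \tfrac12 a_{11}^2 \neq 0$, so $\sov{2}\neq 0$ automatically. The key observation is that $(A - \mu I)$ is itself lower triangular, so the eigenvector equation $A\sov{j} = \mu_j \sov{j}$ forces, row by row from the top, strong constraints relating $\mu_j$ to the $a_{ii}$ and relating successive components of $\sov{j}$.

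The main steps, in order: (1) Show $\mu_1$ is forced. Since $\sov{1} = \vecc - \vecc = 0$ trivially — wait, $\sov{1} = A\vece - \vecc = 0$ by our standing assumption, so the $j=1$ condition is vacuous; thus $\tilde q_e \ge 1$ always and the real content begins at $j=2$. (2) Analyze $A\sov{2} = \mu_2\sov{2}$. Looking at the first row: $a_{11}\sov{2}_1 = \mu_2 \sov{2}_1$, and since $\sov{2}_1 \neq 0$ we get $\mu_2 = a_{11}$. (3) Analyze $A\sov{3} = \mu_3 \sov{3}$ similarly; the first-row equation gives $a_{11}\sov{3}_1 = \mu_3 \sov{3}_1$, and since $\sov{3}_1 = a_{11}c_1^2 - \tfrac13 c_1^3$ with $c_1 = a_{11}$ gives $\sov{3}_1 = \tfrac23 a_{11}^3 \neq 0$, so $\mu_3 = a_{11}$ as well. (4) Now suppose toward a contradiction that $\tilde q_e \ge 4$. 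The same first-row argument applied to $\sov{4}$, using $\sov{4}_1 = a_{11}c_1^3 - \tfrac14 c_1^4 = \tfrac34 a_{11}^4 \neq 0$, forces $\mu_4 = a_{11}$. (5) Derive the contradiction by descending to the second component (and possibly third). With all $\mu_j = a_{11}$ for $j=2,3,4$, the second-row equations $a_{21}\sov{j}_1 + a_{22}\sov{j}_2 = a_{11}\sov{j}_2$ give $\sov{j}_2 = \dfrac{a_{21}\,\sov{j}_1}{a_{11}-a_{22}}$ (if $a_{11}\neq a_{22}$; the case $a_{11}=a_{22}$ forces $a_{21}\sov{j}_1 = 0$ hence $a_{21}=0$, a separate branch). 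One then has explicit rational expressions for $\sov{j}_1$ and $\sov{j}_2$ as polynomials in $c_1, c_2, a_{21}, a_{22}$; the consistency of all four eigenvector relations $j=1,\dots,4$ (equivalently the three nontrivial ones $j=2,3,4$) over just the first two rows is an overdetermined polynomial system whose only solutions collapse the scheme (e.g. force $c_2 = c_1$ or $a_{21}=0$ or a reducibility-type degeneration), contradicting $\tilde q_e \ge 4$ for a genuine DIRK scheme.

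The hard part will be step (5): turning the ``overdetermined polynomial system'' intuition into a clean algebraic contradiction. The clean route is probably to observe that each eigenvector condition $A\sov{j}=a_{11}\sov{j}$ means $\sov{j}\in\ker(A-a_{11}I)$, and since $A-a_{11}I$ is lower triangular with first diagonal entry $0$ and subsequent diagonal entries $a_{ii}-a_{11}$, its kernel is at most the span of a single vector once we restrict attention to the first few coordinates where $a_{ii}\neq a_{11}$ — so $\sov{2},\sov{3},\sov{4}$ would all have to be parallel in their leading components. Then comparing $\sov{3}_1/\sov{2}_1 = \tfrac{4}{3}a_{11}$ against $\sov{3}_2/\sov{2}_2$, and $\sov{4}_1/\sov{2}_1$ against $\sov{4}_2/\sov{2}_2$, yields polynomial identities in $a_{11}, a_{21}, a_{22}, c_2$ that cannot simultaneously hold unless $c_2 = c_1 = a_{11}$, which makes the first two stages coincide and contradicts irreducibility (or, in the non-stiffly-accurate bookkeeping, simply exhausts the available degrees of freedom). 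I would also need to dispose of the degenerate branch $a_{11}=a_{22}$ (and, if it arises, $a_{11}=a_{33}$) separately, but in each such branch the off-diagonal entry is forced to zero and the argument restarts one stage lower, so it does not escape the bound.
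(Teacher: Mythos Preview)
Your setup through step~(4) is exactly the paper's approach: restrict to the upper $2\times 2$ block, use $\sov{j}_1=(1-\tfrac1j)a_{11}^j\neq 0$ to pin $\mu_j=a_{11}$, then examine the second-row eigenvector equation. But step~(5) is where the actual content lives, and your proposal does not carry it out. The paper writes the second-row condition as an explicit homogeneous polynomial in $(a_{11},a_{21},a_{22})$, passes to the projective plane $(a_{11}/a_{21},\,a_{22}/a_{21})$, and \emph{solves} the system for $j=2$ and $j=3$. The outcome is not what your parallelism heuristic predicts: besides the line $c_1=c_2$ there are two isolated non-equal-time solutions, $P_1=(-4+3\sqrt2,\,\sqrt2-1)$ and $P_2=\bigl(-(\sqrt2+1)(\sqrt2+2),\,-(\sqrt2+1)\bigr)$. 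The bound $\tilde q_e\le 3$ then follows only after checking directly that the $j=4$ equation fails at $P_1$ and at $P_2$. Your assertion that the constraints ``cannot simultaneously hold unless $c_2=c_1$'' skips precisely this computation, and the intermediate structure---two extra points surviving $j=2,3$ that must be killed by $j=4$---shows the system is more delicate than an overdetermination count suggests.

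Your disposal of the $c_1=c_2$ branch is also not right. Equal abscissae $c_1=c_2$ do not make the first two stages ``coincide'' (the rows of $A$ may still differ) and do not imply reducibility; moreover the theorem does not assume irreducibility. The paper handles the equal-time line through the second half of Definition~\ref{def:wso_ev}: if $\vecc=\nu\vece$ then $\sov{j}=(1-\tfrac1j)\nu^j\vece$, so the requirement $\vecipower{b}{T}\sov{j}=0$ together with $\nu=a_{11}\neq 0$ forces $\vecipower{b}{T}\vece=0$, contradicting consistency (the paper phrases this as equal-time schemes being limited to order~2). Your ``restart one stage lower'' instinct for the degenerate branches $a_{11}=a_{22}$ is reasonable, but it too must eventually confront and solve an explicit polynomial system rather than merely count equations.
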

\begin{proof}
Because the $\sov{j}$ only depend on $A$, the eigenvector relation in Def.~\ref{def:wso_ev} depends only on $A$, not on $\vec{b}$. With $A$ lower triangular, the first $k$ components of $\sov{j}$ depend only on the upper $k$ rows of $A$; and the same is true for the eigenvector relation as well. Hence, for a scheme to have an $A$ that allows for the WSO eigenvector criterion of order $\tilde{q}_e$, all upper sub-matrices of $A$ must admit the same, too. We can therefore study $A$ row by row.
The first component of $\sov{j}$ equals $(1-\frac{1}{j})a_{11}^j$, which is nonzero for $j>1$. Hence, the first row of the equation $A\sov{j} = \mu_j\sov{j}$ is equivalent to $\mu_j = a_{11}$. With that, we can move to the second row of the equation, which reads
\begin{equation}
\label{eq:weak_stage_order_condition_DIRK_2x2}
(1\!-\!\tfrac{1}{j})a_{11}^j a_{21}
+ (a_{22}\!-\!a_{11})
\prn{a_{11}^{j-1}a_{21} + (a_{21}\!+\!a_{22})^{j-1}a_{22} - \tfrac{1}{j}(a_{21}\!+\!a_{22})^j}
= 0\;.
\end{equation}
To determine the set of solutions $(a_{11},a_{21},a_{22})$ of \eqref{eq:weak_stage_order_condition_DIRK_2x2}, we first observe that \eqref{eq:weak_stage_order_condition_DIRK_2x2} is homogeneous, i.e., if $(a_{11},a_{21},a_{22})$ solves \eqref{eq:weak_stage_order_condition_DIRK_2x2}, then $(\mu a_{11},\mu a_{21},\mu a_{22})$ solves \eqref{eq:weak_stage_order_condition_DIRK_2x2} as well for any $\mu\in\mathbb{R}$. It therefore suffices to consider the solutions of \eqref{eq:weak_stage_order_condition_DIRK_2x2} in the 2D-plane $(\frac{a_{11}}{a_{21}},\frac{a_{22}}{a_{21}})$. Figure~\ref{fig:order_curves_upper_block} shows the resulting solution curves for $j\in\{2,3,4\}$.

\begin{figure}
\begin{minipage}[b]{.46\textwidth}
\includegraphics[width=\textwidth]{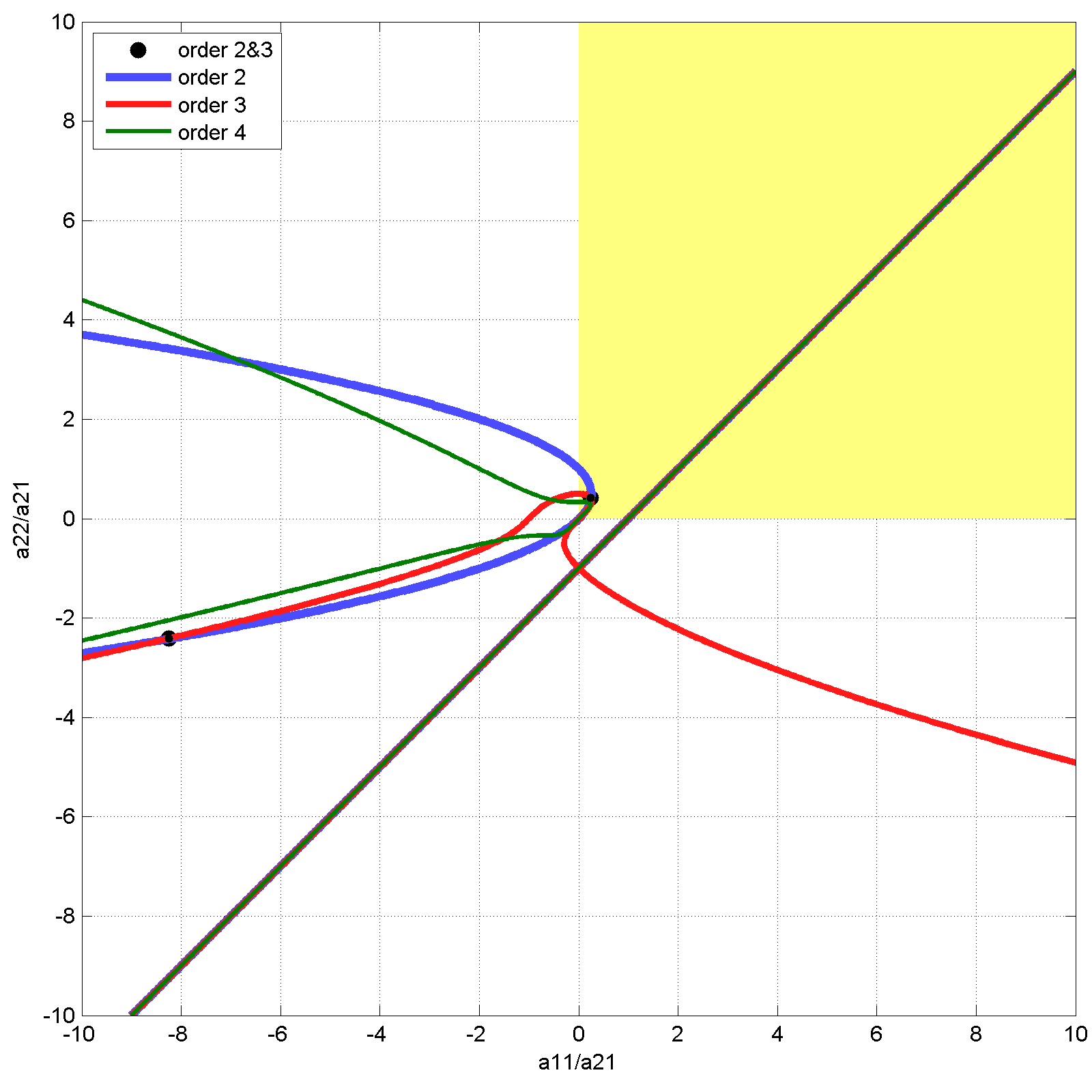}
\end{minipage}
\hfill
\begin{minipage}[b]{.46\textwidth}
\includegraphics[width=\textwidth]{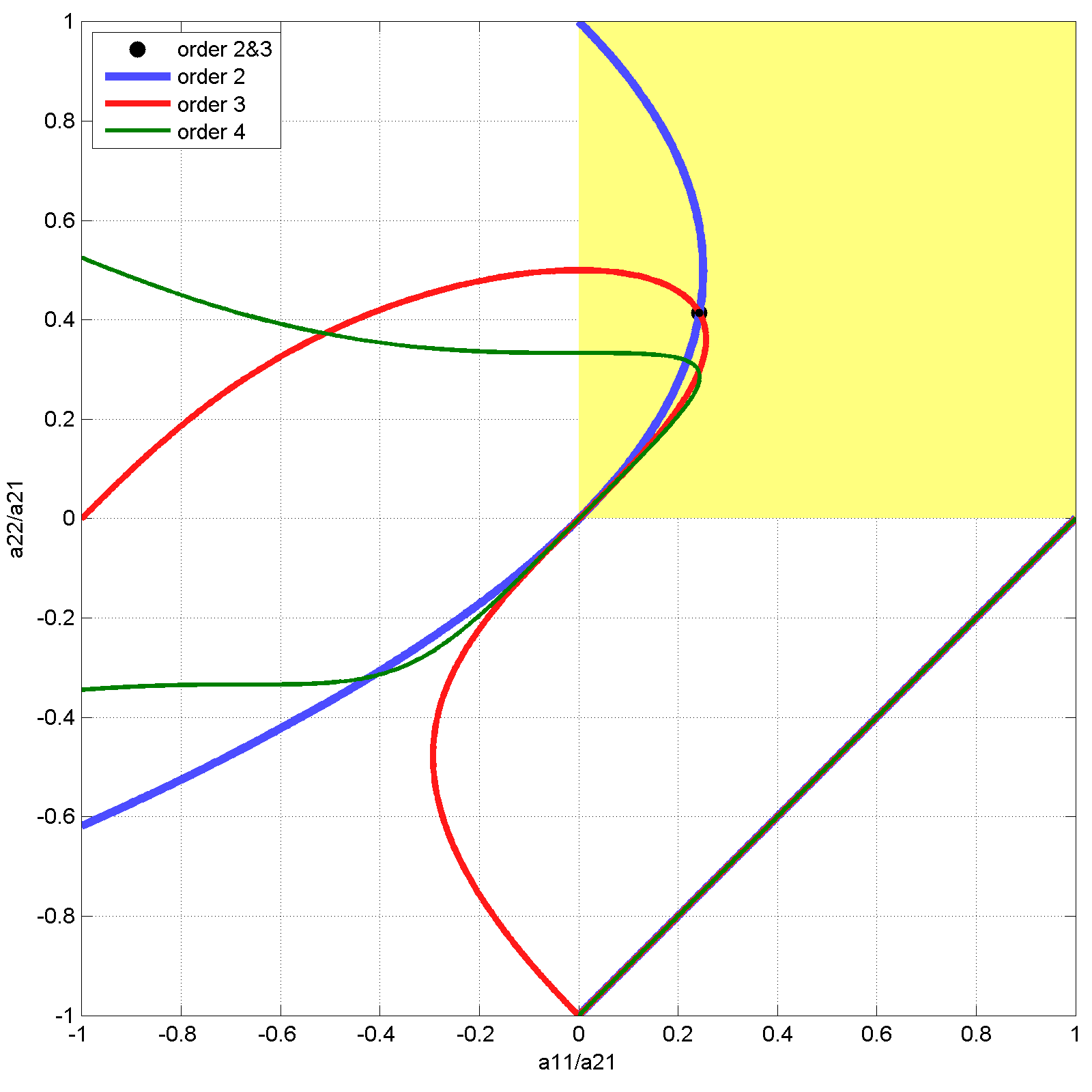}
\end{minipage}
\caption{Curves of WSO orders 2,3, and 4 as functions of the re-scaled parameters $\frac{a_{11}}{a_{21}}$ and $\frac{a_{22}}{a_{21}}$. Left panel:~scale 10; right panel:~scale 1. All orders are satisfied along the line of slope 1 going through (1,0), corresponding to equal-time DIRK schemes. Moreover, there are two further points (other than the origin), where orders 2 and 3 are satisfied. Neither of these two points satisfies order 4.}
\label{fig:order_curves_upper_block}
\end{figure}

One class of solutions lies on the straight line of slope 1 passing through $(1,0)$. Those schemes are \emph{equal-time} methods, i.e., RK schemes that have $\vec{c} = \nu\vec{e}$, where $\nu\in\mathbb{R}$ is a constant. In fact, equal-time schemes satisfy the eigenvector relation for all $j$. However, they are not particularly useful RK methods, because---among other limitations---they are restricted to second order. This follows because the order 1 and 2 conditions require $\vecpower{b}{T}\vec{e} = 1$ and $\vecpower{b}{T}\vec{c} = \frac{1}{2}$. Thus $\nu = \frac{1}{2}$, and $\vecpower{b}{T}\vec{c}^2 = \nu^2 = \frac{1}{4}$, which contradicts the order 3 condition $\vecpower{b}{T}\vec{c}^2 = \frac{1}{3}$. Note that the equal-time scenario also covers the points at infinity in Fig.~\ref{fig:order_curves_upper_block}, i.e., the schemes with $a_{21} = 0$.

Non-equal-time schemes that satisfy \eqref{eq:weak_stage_order_condition_DIRK_2x2} for $j=2$ and $j=3$ are the following two points in the $(\frac{a_{11}}{a_{21}},\frac{a_{22}}{a_{21}})$ plane: $P_1 = (-4+3\sqrt{2},\sqrt{2}-1) = (0.2426,0.4142)$ and $P_2 = (-(\sqrt{2}+1)(\sqrt{2}+2),-(\sqrt{2}+1)) = (-8.2426,-2.4142)$. None of these two points satisfies \eqref{eq:weak_stage_order_condition_DIRK_2x2} for $j=4$ (green curve in Fig.~\ref{fig:order_curves_upper_block}). Therefore $\tilde{q}_e\le 3$.
\qed
\end{proof}

Among the two sets of solutions found in the proof, $P_1$ implies that $a_{11}$, $a_{21}$, and $a_{22}$ all have the same sign, which is a desirable property. In contrast, $P_2$ implies that $a_{21}<0$. Both WSO 3 schemes presented below correspond to the $P_1$ solution.

\vspace{1.5em}
\section{DIRK Schemes With High Weak Stage Order}
\label{sec:schemes}
Imposing the classical order conditions \cite{ButcherBook2008,HairerNorsettWanner1993}, together with the WSO eigenvector relation (Def.~\ref{def:wso_ev}), we determine RK schemes by searching the parameter space of DIRK schemes (with all diagonal entries non-zero). A stiffly accurate structure ($\vecipower{b}{T}$ equals the last row of $A$) is imposed, as is A-stability (verified by evaluating the stability function $R(\zeta)$ along the imaginary axis). Together this implies that the resulting scheme is L-stable; i.e., it ensures that unresolved stiff modes decay \cite{HairerNorsettWanner1993}. The number of stages is chosen so that the constraints admit solutions. The optimization itself is carried out using MATLAB's optimization toolbox, using multiple local optimization algorithms included in the function \texttt{fmincon}. An effort was made to minimize the $L_2$ norm of the local truncation error coefficients. However, in multiple cases the solver exhibited bad convergence properties; so while the schemes below yield reasonable truncation errors, it should not be expected that they are optimal. We find an order 3 scheme with WSO 2 (see also \cite{RosalesSeiboldShirokoffZhou2017OR}),
\begin{equation*}
\begin{array}{l|@{~~~}l@{~}l@{~~~}l@{~~~}l}
0.01900072890 & 0.01900072890 & & & \\
0.78870323114 & 0.40434605601 & \phantom{-}0.38435717512 & & \\
0.41643499339 & 0.06487908412 & -0.16389640295 & 0.51545231222 & \\
1             &	0.02343549374 & -0.41207877888 & 0.96661161281 & 0.42203167233\\
\hline
& 0.02343549374 & -0.41207877888 & 0.96661161281 & 0.42203167233
\end{array}
\end{equation*}
an order 3 scheme with WSO 3,
\begin{equation*}
\begin{array}{l|@{~}l@{~}l@{~}l@{~~~}l}
0.13756543551 &\phantom{-}0.13756543551 & & & \\
0.80179011576 &\phantom{-}0.56695122794 &\phantom{-}0.23483888782 & & \\
2.33179673002 &-1.08354072813 &\phantom{-}2.96618223864 &\phantom{-}0.44915521951 & \\
1             &\phantom{-}0.59761291500 &-0.43420997584 &-0.05305815322 & 0.88965521406 \\
\hline
&\phantom{-}0.59761291500 &-0.43420997584 &-0.05305815322 & 0.88965521406
\end{array}
\end{equation*}
and an order 4 scheme with WSO 3,
\begin{equation*}
\begin{footnotesize}
\begin{array}{l|@{~}l@{~~~}l@{~}l@{~}l@{~~~}l@{~~~}l}
\sm{0.079672377876931} &\sm{\phantom{-}0.079672377876931} &                 \sm{0} &      \sm{\phantom{-}0} &                 \sm{\phantom{-}0} &                 \sm{0}                 & \sm{0} \\
\sm{0.464364648310935} &\sm{\phantom{-}0.328355391763968} & \sm{0.136009256546967} &      \sm{\phantom{-}0} &                 \sm{\phantom{-}0} &                 \sm{0}                 & \sm{0} \\
\sm{1.348559241946724} &\sm{-0.650772774016417} & \sm{1.742859063495349} &\sm{\phantom{-}0.256472952467792} &                 \sm{\phantom{-}0} &                 \sm{0}                 & \sm{0} \\
\sm{1.312664210308764} &\sm{-0.714580550967259} & \sm{1.793745752775934} &\sm{-0.078254785672497} & \sm{\phantom{-}0.311753794172585} & \sm{0}                 & \sm{0} \\
\sm{0.989469293495897} &\sm{-1.120092779092918} & \sm{1.983452339867353} &\sm{\phantom{-}3.117393885836001} &\sm{-3.761930177913743} & \sm{0.770646024799205} &                 \sm{0} \\
\sm{1}                 &\sm{\phantom{-}0.214823667785537} & \sm{0.536367363903245} &\sm{\phantom{-}0.154488125726409} &\sm{-0.217748592703941} &  \sm{0.072226422925896} & \sm{0.239843012362853} \\
\hline
\sm{1}                 &\sm{\phantom{-}0.214823667785537} & \sm{0.536367363903245} &\sm{\phantom{-}0.154488125726409} &\sm{-0.217748592703941} &  \sm{0.072226422925896} & \sm{0.239843012362853}
\end{array}
\end{footnotesize}
\end{equation*}

\vspace{1.5em}
\section{Numerical Results}
\label{sec:numerical_results}
In this section we verify the order of accuracy of the schemes above and demonstrate that WSO remedies order reduction for linear problems. We confirm that WSO $p$ is required for ODEs, and WSO $p-1$ is required for PDE IBVPs. In addition, we study the effect of WSO for two nonlinear problems.

\subsection{Linear ODE test problem}
We consider the linear ODE test problem \eqref{eq:linear_ode_test_problem} with the true solution $\phi(t) = \sin(t + \frac{\pi}{4})$, the stiffness parameter $\lambda = -10^4$, and the initial condition $u(0) = \sin(\frac{\pi}{4})$. The problem is solved using three 3rd order DIRK schemes (with WSO 1, 2, and 3) and two 4th order DIRK schemes (with WSO 1 and 3)\footnote{We do not construct an order 4 scheme with WSO 2, as we see no role for such a method.} up to the final time $T=10$. The convergence results are shown in Fig.~\ref{fig:linear_ode_test}. In the stiff regime where $|\zeta| = |\lambda|\dt \gg 1$, first order convergence is observed for the WSO 1 schemes as expected, the WSO 2 scheme improves the convergence rate to 2, and the WSO 3 schemes exhibit 3rd order convergence. In addition to yielding better convergence orders in the stiff regime, the schemes with higher WSO also turn out to yield substantially smaller error constants in the non-stiff regime ($\Delta t\ll 1/|\lambda|$). For comparison, we also display a DIRK scheme with explicit first stage (EDIRK), that is, $a_{11}=0$, of stage order 2 (see Thm.~\ref{thm:stage_order}). The left panel of Fig.~\ref{fig:linear_ode_test} shows that the WSO 2 scheme exhibits the same convergence behavior as the stage order 2 EDIRK scheme and performs equally well in terms of accuracy.

\begin{figure}[thb]
\centering
\includegraphics[width = 0.45\textwidth]{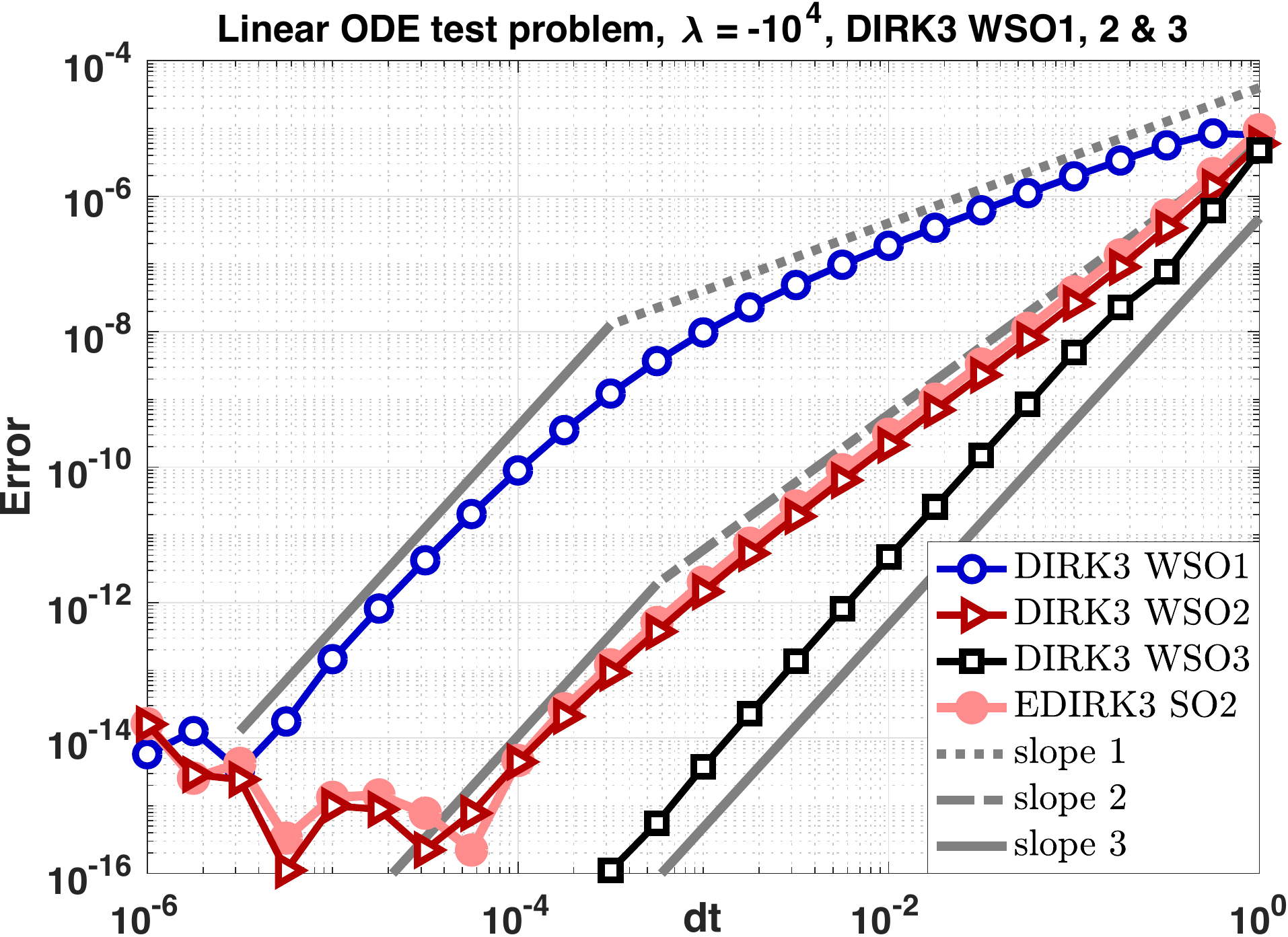}
\includegraphics[width = 0.45\textwidth]{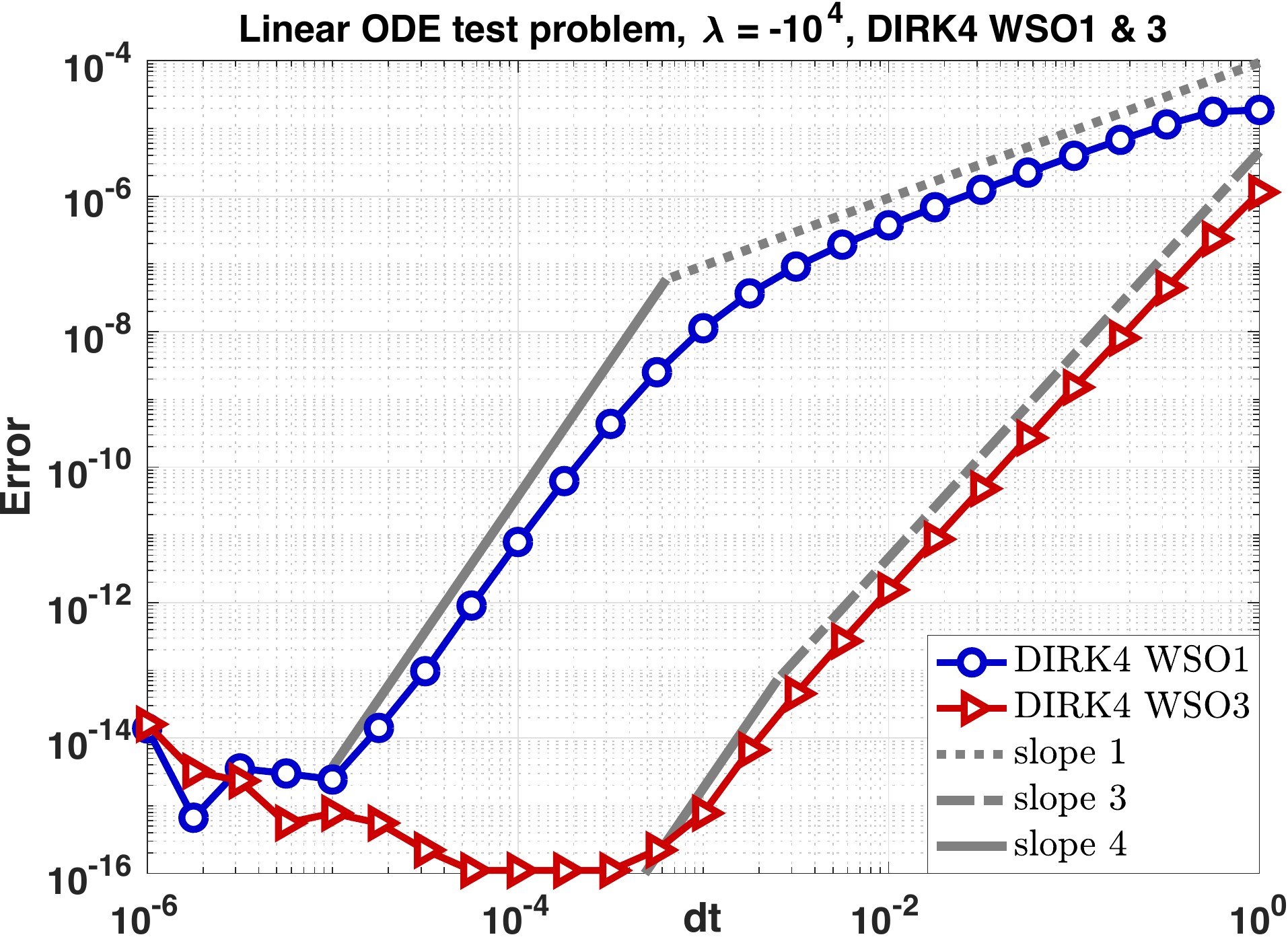}
\caption{Error convergence for linear ODE test problem \eqref{eq:linear_ode_test_problem}.
	Left: 3rd order DIRK schemes with WSO 1 (blue circles), WSO 2 (red triangles), WSO 3 (black squares), and a 3rd order EDIRK scheme with stage order 2 (light red dots).
	Right: 4th order DIRK schemes with WSO 1 (blue circles) and WSO 3 (red triangles).}
\label{fig:linear_ode_test}
\end{figure}

\subsection{Linear PDE test problem: Schr\"odinger equation}
As a linear PDE test problem, we study the dispersive Schr\"odinger equation. The method of manufactured solutions is used, i.e., the forcing, the boundary conditions (b.c.) and initial conditions (i.c.) are selected to generate a desired true solution. The spatial approximation is carried out using 4th order centered differences on a fixed spatial grid of 10000 cells. This renders spatial approximation errors negligible and thus isolates the temporal errors due to DIRK schemes. The errors are measured in the maximum norm in space.

\begin{figure}[thb]
	\centering
	\includegraphics[width = 0.32\textwidth]{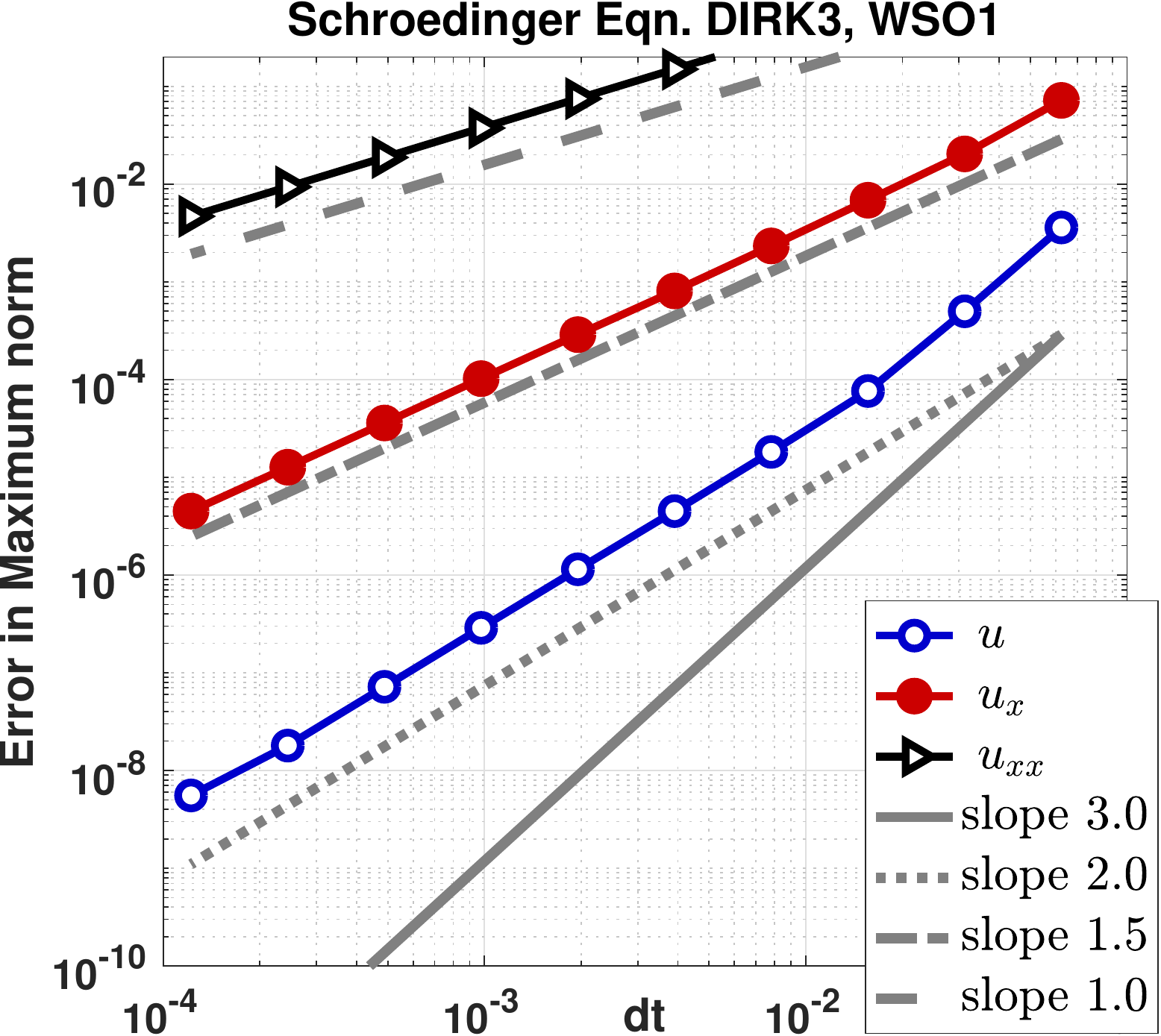}
	\includegraphics[width = 0.32\textwidth]{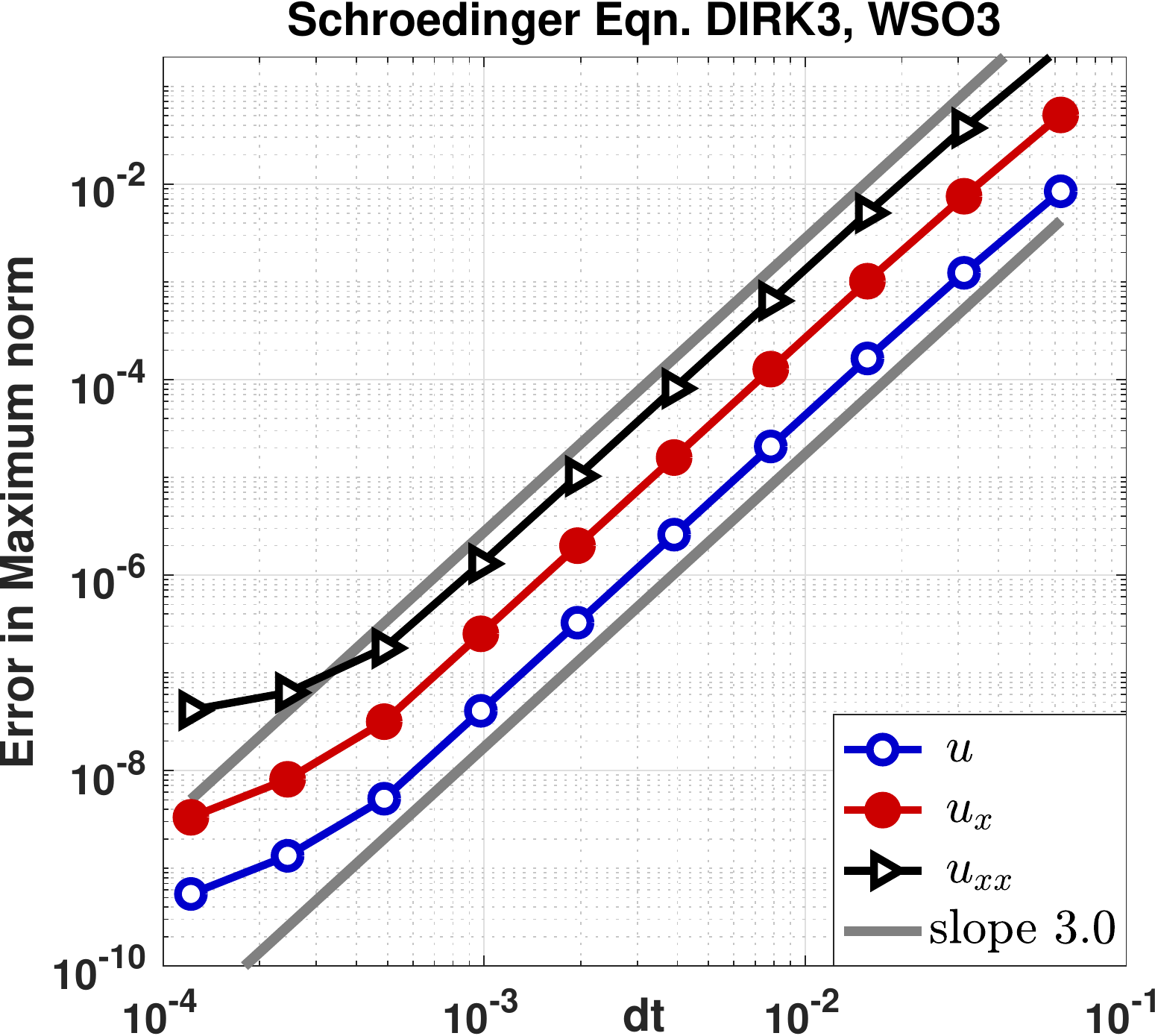}
    \includegraphics[width = 0.32\textwidth]{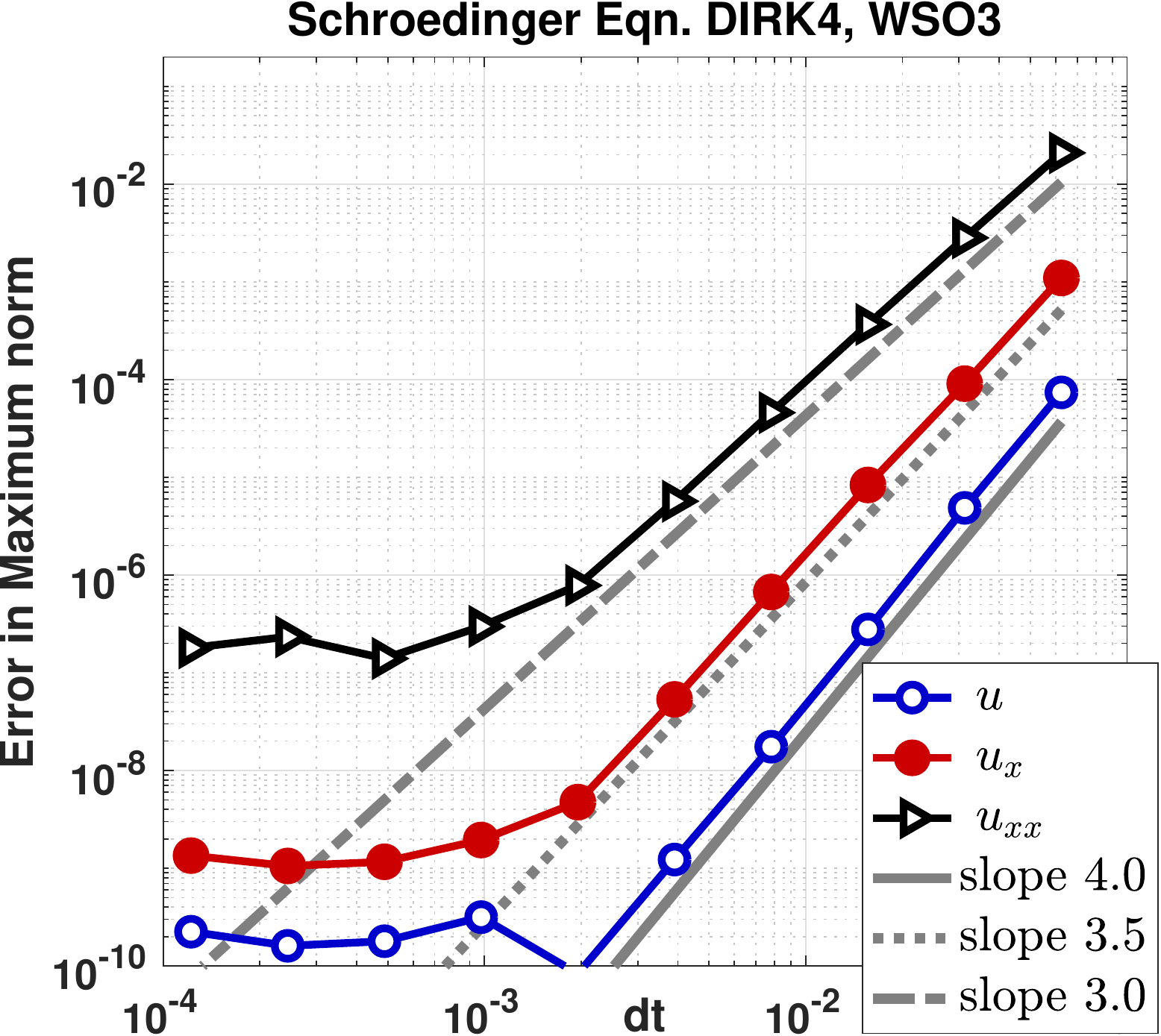}
	\caption{Error convergence for the Schr\"odinger equation using 3rd order DIRK schemes with WSO 1 (left) and WSO 3 (middle), and a 4th order DIRK with WSO 3 (right).}
	\label{fig:schro_eqn_test}
\end{figure}

We consider
\begin{equation}
u_t = \frac{i \omega}{k^2}u_{xx}\;\;\mbox{for}\;\;(x,t)\in(0,1)\times(0,1.2],\quad u = g\;\;\mbox{on}\;\;\{0,1\}\times(0,1.2]\;,
\end{equation}
with the true solution $u(x,t) = e^{i(kx-\omega t)}$, $\omega=2\pi$ and $k=5$.
Figure~\ref{fig:schro_eqn_test} shows the convergence orders of $u$, $u_x$ and $u_{xx}$ for 3rd order DIRK schemes with WSO 1 (left), WSO 3 (middle) and a 4th order DIRK scheme with WSO 3 (right). For IBVPs, spatial boundary layers are produced by RK methods, thus limiting the convergence order in $u$ to $\soq + 1$, with an additional half an order loss per derivative when $\soq<p$ \cite{RosalesSeiboldShirokoffZhou2017OR}. As a result, the 4th order WSO 3 scheme recovers 4th order convergence in $u$ and improves the convergence in $u_x$ and $u_{xx}$. When $\soq=p$, the full convergence order in $u$, $u_x$ and $u_{xx}$ is achieved, as seen in the middle panel in Fig.~\ref{fig:schro_eqn_test}.

\subsection{Nonlinear PDE test problem: Burgers' equation}
This example demonstrates that WSO avoids order reduction for certain nonlinear IBVPs as well. We consider the viscous Burgers' equation with pure Neumann b.c.
\begin{equation}
u_t + u u_x = \nu u_{xx} + f\;\;\mbox{for}\;\;(x,t)\in(0,1)\times(0,1],\quad
u_x= h\;\;\mbox{on}\;\;\{0,1\}\times(0,1]\;.
\end{equation}
Here $\nu = 0.1$ and $u(x,t) = \cos(2+10t)\sin(0.2+20x)$. The nonlinear implicit equations arising at each time step are solved using a standard Newton iteration. The choice of Neumann b.c.\ distinguishes this example from the one given in \cite{RosalesSeiboldShirokoffZhou2017OR}. With Neumann b.c., the convergence order in $u$ is limited to $\soq+1.5$ (half an order better than with Dirichlet b.c.). Figure~\ref{fig:burgers_test} shows that order reduction arises with the stage order 1 scheme, and that the WSO2 scheme recovers 3rd order convergence for $u$ and $u_x$, and the 3rd order WSO 3 scheme yields 3rd order convergence for $u$, $u_x$ and $u_{xx}$.

\begin{figure}[thb]
	\centering
	\includegraphics[width = 0.32\textwidth]{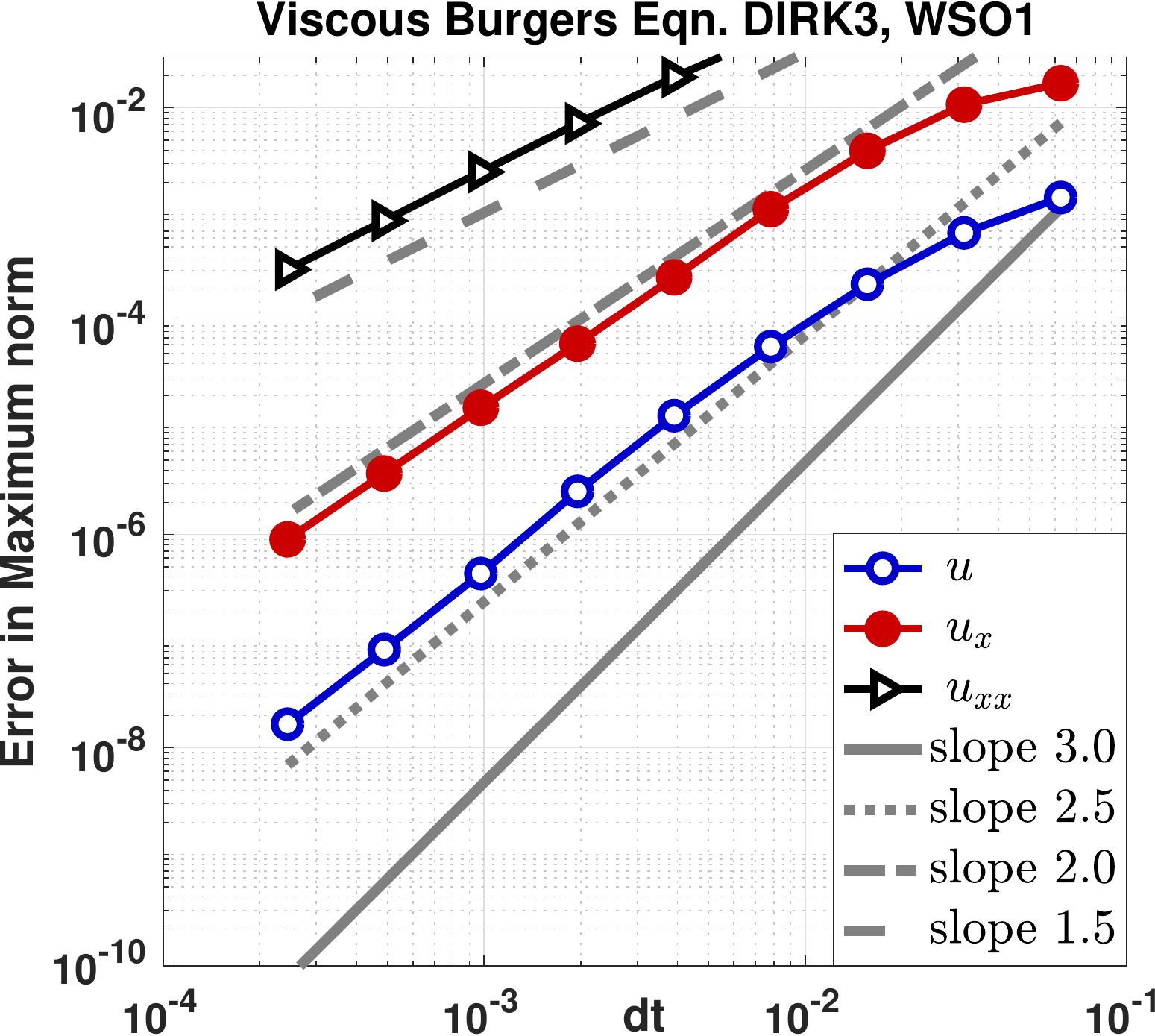}
	\includegraphics[width = 0.32\textwidth]{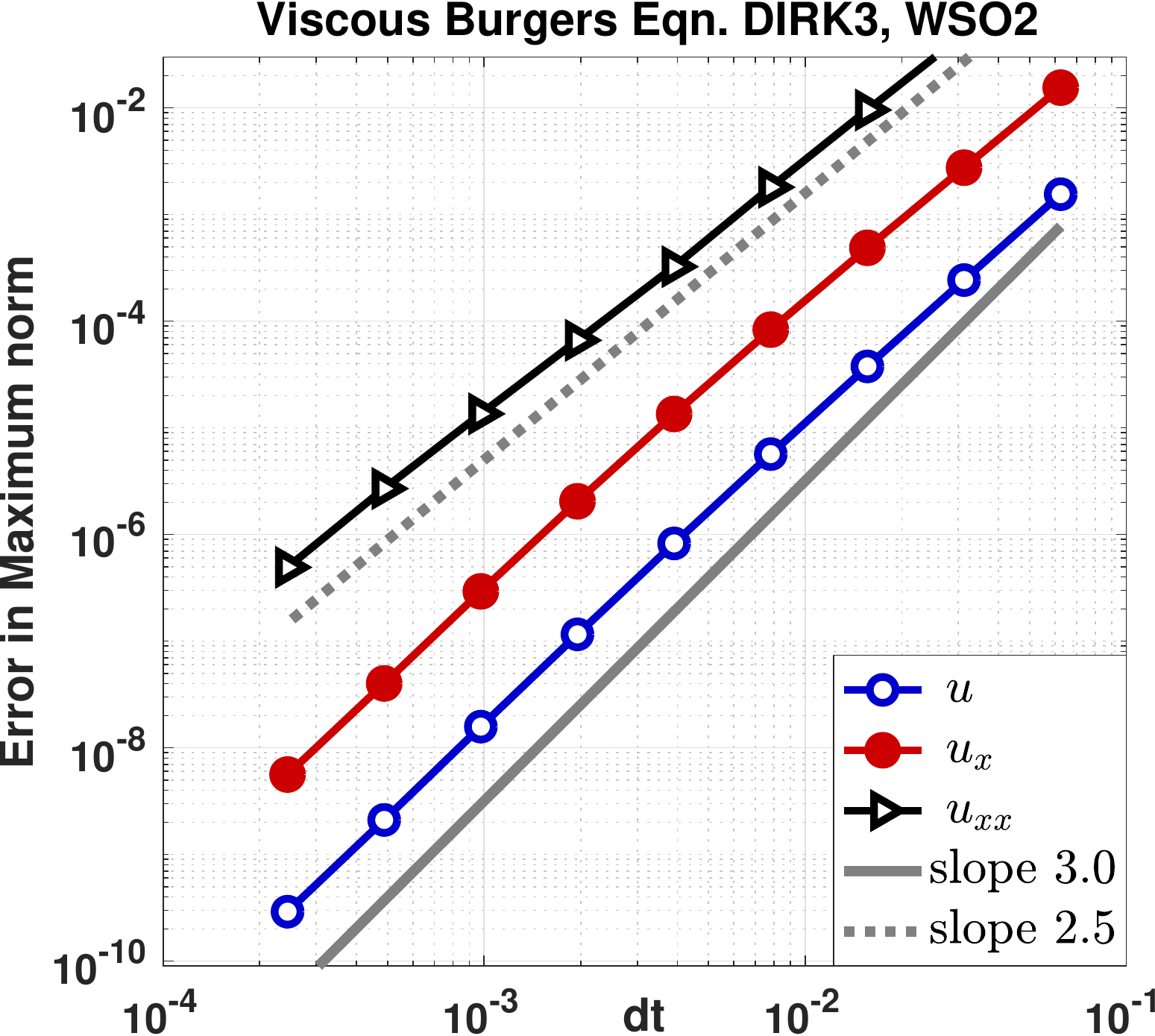}
    \includegraphics[width = 0.32\textwidth]{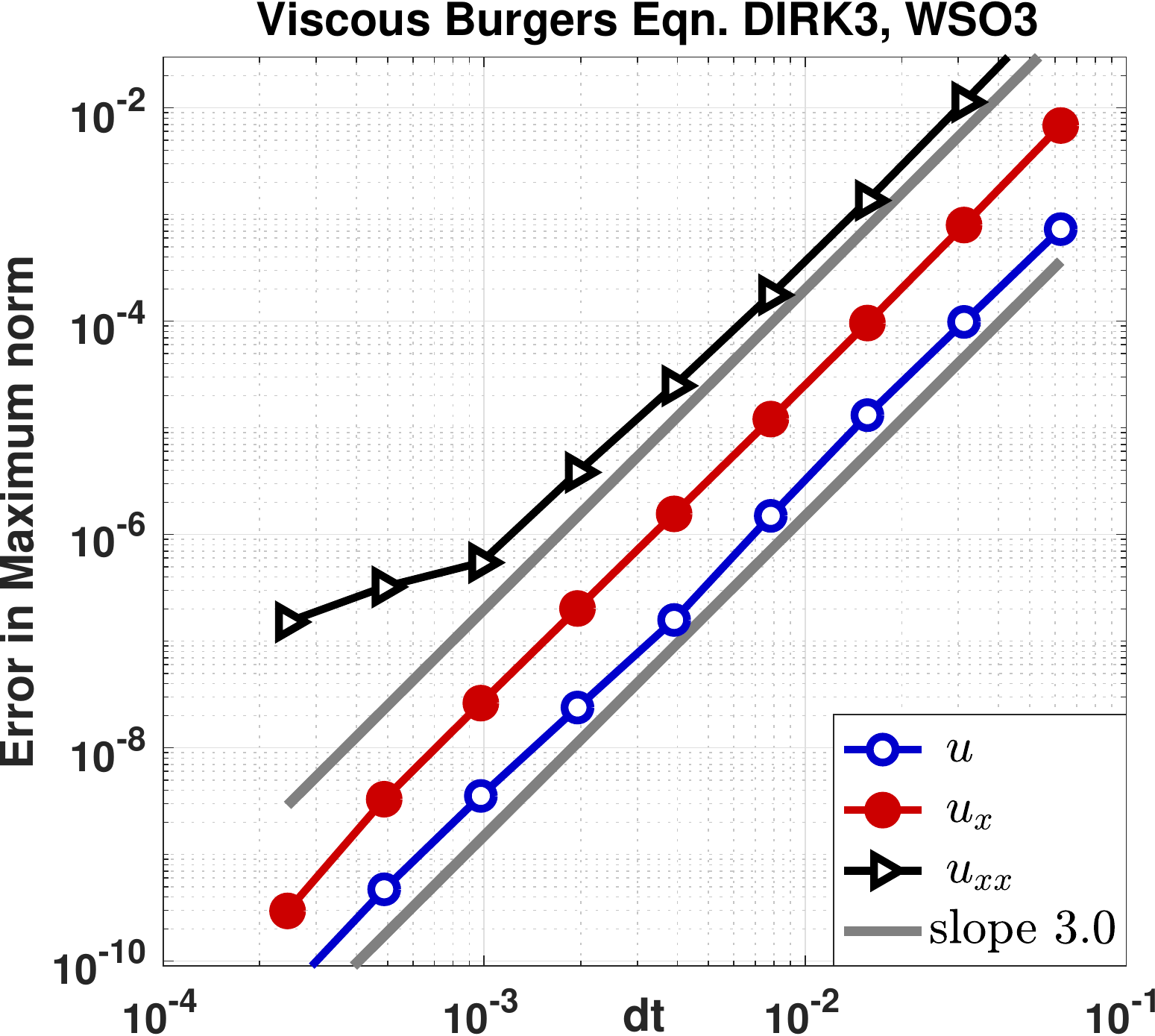}
	\caption{Error convergence for the viscous Burgers' equation using 3rd order DIRK schemes with WSO 1 (left), WSO 2 (middle) and WSO3 (right).}
	\label{fig:burgers_test}
\end{figure}

\subsection{Stiff nonlinear ODE: Van der Pol oscillator}
This example illustrates that DIRK schemes with high WSO may not remove order reduction for all types of nonlinear problems. Consider the Van der Pol oscillator
\begin{equation}
x' = y\;\;\;\text{and}\;\;\; y' = \mu(1-x^2)y-x\;,
\end{equation}
with i.c.\ $(x(0),y(0))=(2,0)$, stiffness parameter $\mu = 500$, and final time $T = 10$. The nonlinear system at each time step is solved via MATLAB's built-in nonlinear system solver. The ``exact'' solution is computed using explicit RK4 with a time step $\dt = 10^{-6}$. In this case, the presented DIRK schemes with high WSO do not improve the convergence rates in the stiff regime and they perform worse than the WSO 1 scheme in terms of accuracy (see Fig.~\ref{fig:vdp_test}). On the other hand, an EDIRK with stage order 2 improves the rate of convergence in the stiff regime (see right panel in Fig.~\ref{fig:vdp_test}). However, it does so, interestingly, by yielding larger errors for large time steps.

\begin{figure}[thb]
	\centering
	\includegraphics[width = 0.45\textwidth]{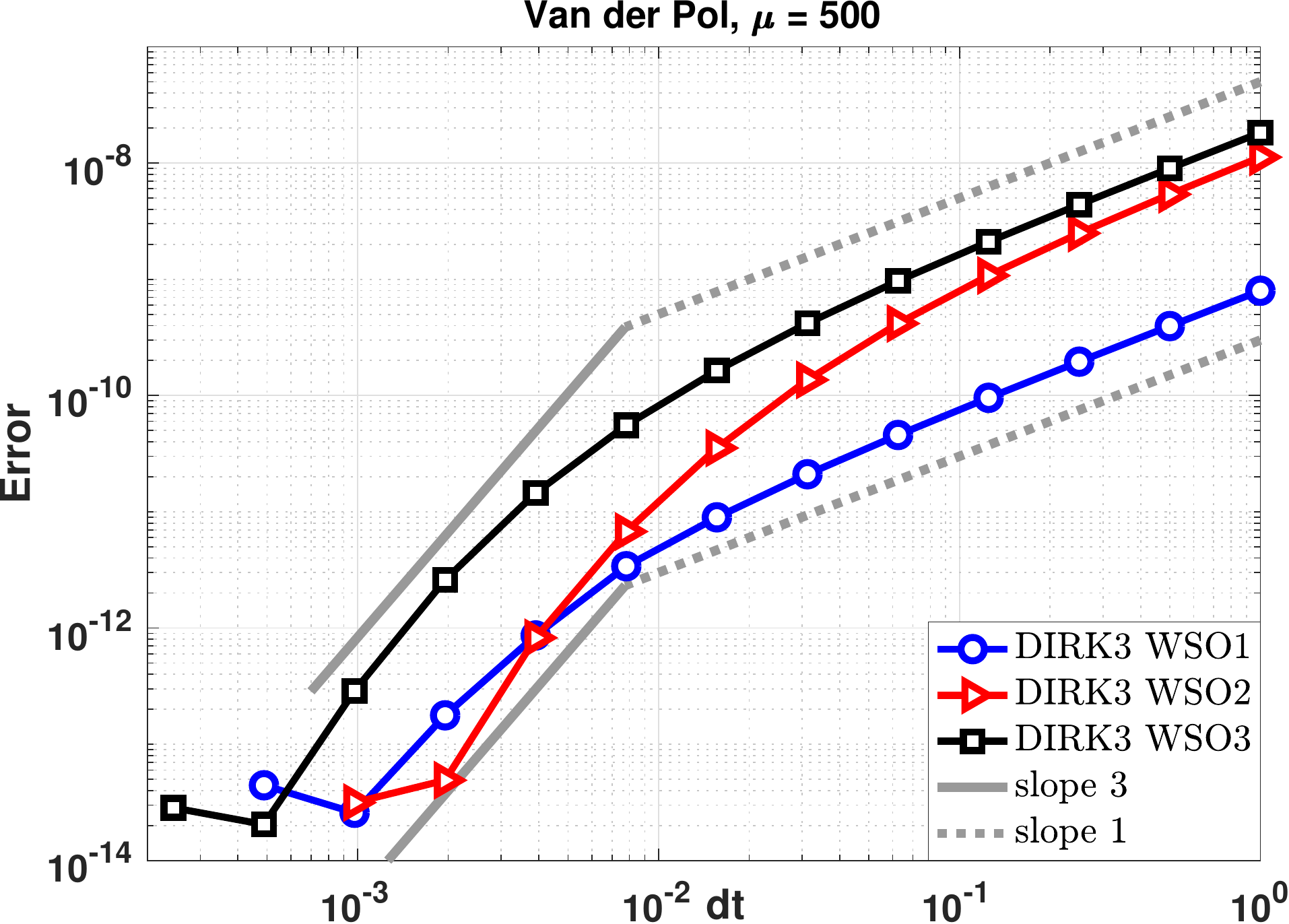}
	\includegraphics[width = 0.45\textwidth]{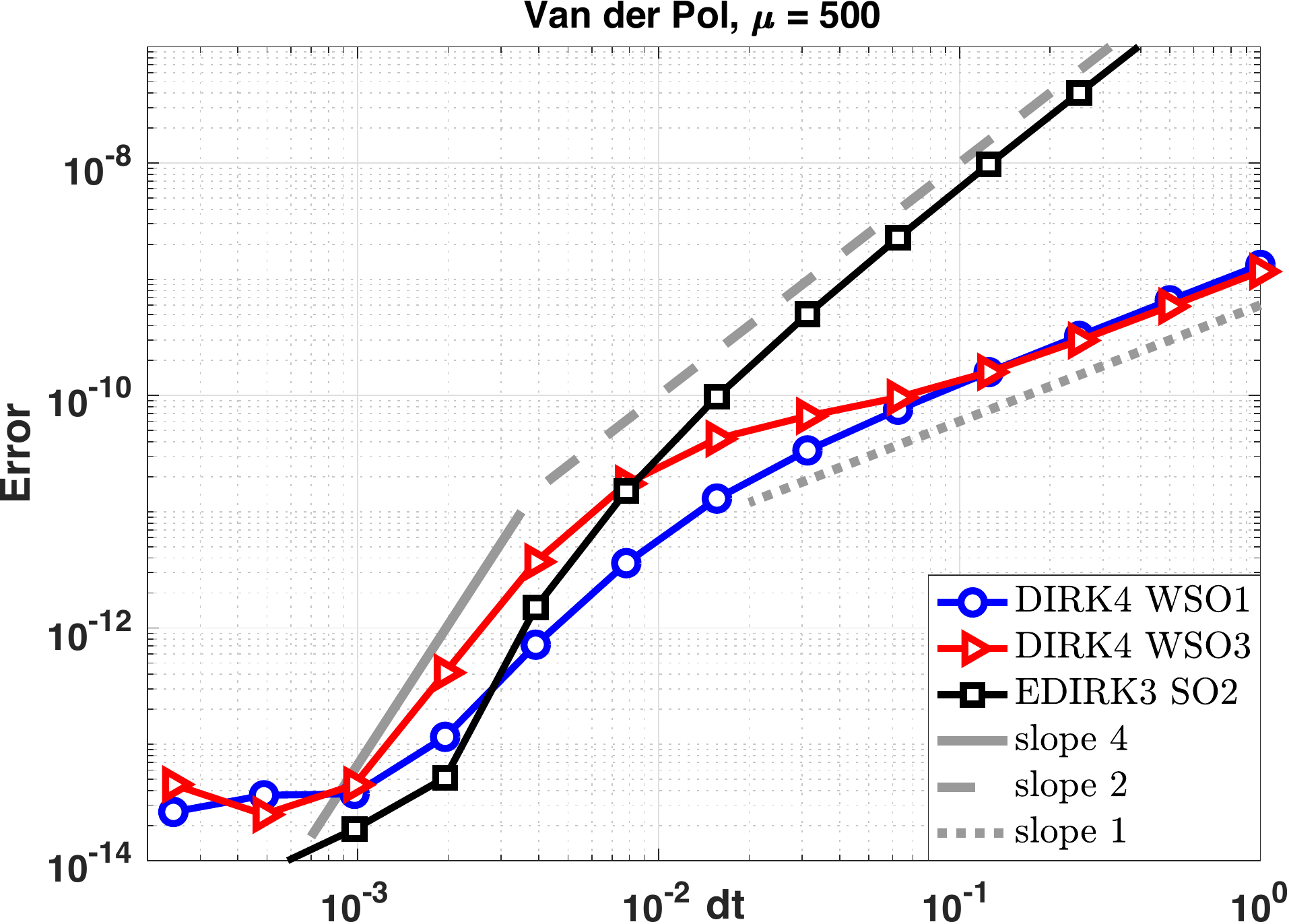}
	\caption{Error convergence for Van der Pol's equation.
		Left: 3rd order DIRK schemes with WSO 1 (blue circles), WSO 2 (red triangles) and WSO 3 (black squares).
		Right: 4th order DIRK schemes with WSO 1 (blue circles) and WSO 3 (red triangles), and a 3rd order EDIRK scheme with stage order 2 (black squares).}
	\label{fig:vdp_test}
\end{figure}

\vspace{1.5em}
\section{Conclusions and Outlook}
This study demonstrates that it is possible to overcome order reduction (OR) for certain classes of problems in the context of DIRK schemes, even though these are limited to low stage order. A specific \emph{weak stage order} (WSO) ``eigenvector'' criterion has been presented, analyzed, and applied to determine DIRK schemes with WSO up to 3. The numerical results confirm that the schemes avoid OR for linear problems and for some nonlinear problems in which the mechanism for order reduction is linear (i.e., boundary conditions). The key limitation found herein is that the eigenvector criterion cannot go beyond WSO 3 for DIRK schemes. Hence, a key question of future research is how high WSO is admitted by the general criterion in Def.~\ref{def:wso}. Another important future research task is to devise further DIRK schemes that are truly optimized in terms of truncation error coefficients or other criteria.

\vspace{1.5em}
\section*{Acknowledgment}
This work was supported by the National Science Foundation via grants DMS--1719640 (BS\&{}DZ), DMS--1719693 (DS), DMS--2012271 (BS), DMS--2012268 (DS); and the Simons Foundation (\#359610) (DS).

\vspace{1.5em}
\bibliographystyle{plain}
\bibliography{references_complete,references_extra}

\begin{thebibliography}{10}

\bibitem{BurragePetzold1990}
K.~Burrage and L.~Petzold.
\newblock On order reduction for {R}unge-{K}utta methods applied to
  differential/algebraic systems and to stiff systems of {ODE}s.
\newblock {\em SIAM J. Numer. Anal.}, 27(2):447--456, 1990.

\bibitem{ButcherBook2008}
J.~C. Butcher.
\newblock {\em {Numerical Methods for Ordinary Differential Equations, 2nd
  Edition}}.
\newblock Wiley, New York, 2008.

\bibitem{CarpenterGottliebAbarbanelDon1995}
M.~H. Carpenter, D.~Gottlieb, S.~Abarbanel, and W.-S. Don.
\newblock The theoretical accuracy of runge-kutta time discretizations for the
  initial boundary value problem: a study of the boundary error.
\newblock {\em SIAM J. Sci. Comput.}, 16(6):1241--1252, 1995.

\bibitem{DitkowskiGottlieb2017}
A.~Ditkowski and S.~Gottlieb.
\newblock Error inhibiting block one-step schemes for ordinary differential
  equations.
\newblock {\em J. Sci. Comput.}, 73(2-3):691--711, 2017.

\bibitem{HairerNorsettWanner1993}
E.~Hairer, S.~P. N{\o}rsett, and G.~Wanner.
\newblock {\em Solving Ordinary Differential Equations I (2nd Revised. Ed.):
  Nonstiff Problems}.
\newblock Springer-Verlag New York, New York, NY, USA, 1993.

\bibitem{Minion2003}
M.~L. Minion.
\newblock Semi-implicit spectral deferred correction methods for ordinary
  differential equations.
\newblock {\em Commun. Math Sci.}, 1(3):471--500, 2003.

\bibitem{OstermannRoche1992}
A.~Ostermann and M.~Roche.
\newblock {R}unge-{K}utta methods for partial differential equations and
  fractional orders of convergence.
\newblock {\em Math. Comput.}, 59(200):403--420, 1992.

\bibitem{ProtheroRobinson1974}
A.~Prothero and A.~Robinson.
\newblock On the stability and accuracy of one-step methods for solving stiff
  systems of ordinary differential equations.
\newblock {\em Math. Comput.}, 28(125):145--162, 1974.

\bibitem{RosalesSeiboldShirokoffZhou2017OR}
R.~R. Rosales, B.~Seibold, D.~Shirokoff, and D.~Zhou.
\newblock Order reduction in high-order {R}unge-{K}utta methods for initial
  boundary value problems.
\newblock {\em arXiv preprint arXiv:1712.00897}, 2017.

\bibitem{SanzVerwerHundsdorfer1986}
J.~M. Sanz-Serna, J.~G. Verwer, and W.~H. Hundsdorfer.
\newblock Convergence and order reduction of {R}unge-{K}utta schemes applied to
  evolutionary problems in partial differential equations.
\newblock {\em Numer. Math.}, 50(4):405--418, 1986.

\bibitem{Verwer1986}
J.~G. Verwer.
\newblock Convergence and order reduction of diagonally implicit
  {R}unge-{K}utta schemes in the method of lines.
\newblock In {\em Numerical Analysis: Proceedings of the Dundee Conference on
  Numerical Analysis, 1985}, pages 220--237, 1986.

\bibitem{WannerHairer1991}
G.~Wanner and E.~Hairer.
\newblock {\em Solving ordinary differential equations {II}: Stiff and
  Differential-Algebraic Problems}, volume~1.
\newblock Springer-Verlag, Berlin, 1991.

\end{thebibliography}

\vspace{2.5em}
\end{document}